\documentclass[11pt,titlepage]{article}
\usepackage{amsmath, amssymb}
\usepackage{mathtools}
\usepackage{algorithm}
\usepackage{algpseudocode}
\usepackage{fmtcount}
\usepackage{graphicx}
\usepackage{enumitem} 
\usepackage{subcaption}
\usepackage{afterpage}
\usepackage[noadjust]{cite}
\usepackage[font=small,labelfont=bf]{caption}
\usepackage[labelfont=bf]{caption}
\usepackage{setspace}
\allowdisplaybreaks
\usepackage[title,titletoc]{appendix}
\usepackage{epsfig}
\usepackage{graphicx}
\usepackage{hhline}
\usepackage{latexsym}
\usepackage{amssymb}
\usepackage{amsmath,amscd}
\usepackage{multirow} 
\usepackage{array}
\usepackage{caption}
\usepackage{subcaption}
\usepackage{color}
\usepackage{natbib}
\usepackage{rotating}
\usepackage{graphicx,lscape}
\usepackage{amsmath, amsthm, amssymb}
\usepackage{graphicx}
\usepackage{epsfig}
\usepackage{graphics,color, graphicx}
\usepackage{latexsym}
\usepackage{fullpage}
\usepackage{booktabs,fixltx2e}
\usepackage[flushleft]{threeparttable}
\usepackage{amssymb,amsmath,amscd}
\usepackage{fancyvrb}
\usepackage{pdfpages}
\usepackage{fmtcount}
\usepackage{url}
\usepackage[margin=1in]{geometry}

\newtheorem{proposition}{Proposition}[section]
\newtheorem{assumption}{Assumption}[section]

\newtheorem{corollary}{Corollary}[section]

\theoremstyle{definition}

\theoremstyle{definition}
\newtheorem{remark}{Remark}[section]
\setlength\arraycolsep{2pt}
\usepackage{setspace}
\doublespacing

\title{\bf 
\bf Geometric quantile-based measures of multivariate distributional characteristics 
	\medskip
}

\author{
	\sc  Ha-Young Shin and Hee-Seok Oh\\ 
	Department of Statistics\\
	Seoul National University\\
	Seoul 08826, Korea \\
	\\
}

\begin{document}
	\maketitle
	
	\begin{abstract}
	Several new geometric quantile-based measures for multivariate dispersion, skewness, kurtosis, and spherical asymmetry are defined. These measures differ from existing measures, which use volumes, and are easy to calculate. Some theoretical justification is given, followed by experiments illustrating that they are sensible measures of these distributional characteristics and some basic empirical justification for bootstrapped confidence regions.
		\vspace{\baselineskip}
		
		\noindent
		\textbf{Keywords}: Geometric quantiles, multivariate kurtosis, multivariate skewness, spherical symmetry
		
	\end{abstract}
	
	\pagenumbering{arabic}

\section{Introduction}

	Denote the open ball in $\mathbb{R}^n$ with center $p$ and radius $r$ by $B_r^n(p)$. \cite{Chaudhuri1996} defined a geometric notion of quantiles for multidimensional data by defining the $u$-quantile, $u\in B_1^n(0)$, to be the value of $p$ that minimizes $E[\lVert X-p\rVert+\langle u,X-p\rangle]$, which generalizes the univariate quantile loss function. However, a common criticism of these geometric quantiles is that the isoquantile contours need not follow the shape of the distribution very well when it is not spherically symmetric, especially for extreme quantiles, which also need not be contained in the convex hull of the distribution, as shown by \cite{Girard2017}. To mitigate this problem, \cite{Chakraborty2003} devised a procedure to transform the data to have roughly isotropic covariance.

A potential use of geometric quantiles that does not need this procedure is in defining robust measures of multivariate centrality, dispersion, skewness, and kurtosis. For real-valued data, the first four moments are used to define measures of centrality, dispersion, skewness, and kurtosis, respectively. Meanwhile, some quantile-based measures of these quantities also exist in the literature. Denoting the $\tau$-quantile for univariate data by $Q_{\tau}^X$, $\tau\in(0,1)$, the median $Q_{0.5}^X$ is a centrality measure, and 
\begin{equation*} 
\delta_0^X(\beta):=Q_{(1+\beta)/2}^X-Q_{(1-\beta)/2}^X
\end{equation*}
for $\beta\in(0,1)$ measures dispersion; this is the interquartile range when $\beta=1/2$. A standard quantile-based measure of skewness is 
\begin{align*}
\gamma_0^X(\beta):=\frac{Q_{(1+\beta)/2}^X+Q_{(1-\beta)/2}^X-2Q_{1/2}^X}{\delta_0^X(\beta)},
\end{align*}
where $\beta$ is typically set to $1/2$ \citep{Groeneveld1984}, and kurtosis (or tailedness) can be measured by 
\begin{align*}
\kappa_0^X(\beta,\beta'):=\frac{\delta_0^X(\beta')}{\delta_0^X(\beta)}
\end{align*}
for $0<\beta<\beta'<1$ \citep{Balanda1988}.

For multivariate data, \cite{Chaudhuri1996} defined measures of multivariate dispersion, skewness, and kurtosis using the volume of a region enclosed by an isoquantile contour, by which we mean the set of all $u$-quantiles for fixed $\lVert u\rVert\in(0,1)$. However, these measures are only briefly mentioned in that paper and are computationally complex for two reasons. First, since geometric quantiles cannot, in general, be found analytically, it is not feasible to have an exact description of an entire contour as that would require numerically calculating the $u$-quantile for every $u\in B_1^n(0)$ of a given size. Second, computing the volume enclosed by an arbitrary hypersurface is non-trivial even in just two dimensions, especially considering that the enclosed region may not be convex.

In Section \ref{quantile}, we define two new measures for each of these characteristics, one based on averages and the other on suprema, which bypass the second issue altogether by avoiding volumes, while mitigating the first. Furthermore, we introduce a measure of spherical asymmetry and investigate some theoretical underpinnings of these measures. Section \ref{experiments} demonstrates the use of these measures, providing some visualization, and calculates bootstrapped confidence regions and their coverage. The code for this paper, implemented in Python, can be found at \url{https://github.com/hayoungshin1/Quantile-based-measures}.

\section{Quantile-based measures of multivariate dispersion, skewness, kurtosis, and spherical asymmetry} \label{quantile}

For an $n$-dimensional random vector $X$ with unique $u$-quantiles for all $u\in B_1^n(0)$, define a map $q^X:B_1^n(0)\rightarrow \mathbb{R}^n$ that sends $u$ to the $u$-quantile of $X$, and denote $q^X(0)$ by $m^X$. For such an $X$, we introduce two measures of dispersion as follows
\begin{align*}
\delta_1^X(\beta)&:=\sup_{\xi\in S^{n-1}}{\lVert q^X(\beta \xi)-q^X(-\beta \xi)\rVert},\\
\delta_2^X(\beta)&:=\frac{1}{SA(n-1)}\int_{S^{n-1}}\lVert q^X(\beta \xi)-q^X(-\beta \xi)\rVert d\xi,~~\beta\in (0,1), 
\end{align*}
where $SA(n-1)$ is the surface area of the unit $(n-1)$-sphere. For skewness, we suggest the following two measures 
\begin{align*}
\gamma_1^X(\beta)&:=\frac{\sup_{\xi\in S^{n-1}}\lVert q^X(\beta \xi)+q^X(-\beta \xi)-2m^X\rVert}{\delta_1^X(\beta)},\\
\gamma_2^X(\beta)&:=\frac{\int_{S^{n-1}} (q^X(\beta \xi)-m^X) d\xi/SA(n-1)}{\delta_2^X(\beta)},~~\beta\in (0,1),
\end{align*}
and for kurtosis, 
\[
\kappa_1^X(\beta,\beta'):=\frac{\delta_1^X(\beta')}{\delta_1^X(\beta)}~~~\mbox{and}~~~
\kappa_2^X(\beta,\beta'):=\frac{\delta_2^X(\beta')}{\delta_2^X(\beta)},
\]
for $0<\beta<\beta'<1$. 
When $n=1$, it is clear that $\delta_0^X(\beta)=\delta_1^X(\beta)=\delta_2^X(\beta)$, $\lvert\gamma_0^X(\beta)\rvert=\gamma_1^X(\beta)=\lvert\gamma_2^X(\beta)\rvert$, and $\kappa_0^X(\beta,\beta')=\kappa_1^X(\beta,\beta')=\kappa_2^X(\beta,\beta')$. Unlike $\delta_0^X(\beta)$ and $\kappa_0^X(\beta,\beta')$, $\gamma_0^X(\beta)$ can be positive or negative, so its generalization to higher dimensions should be a vector as $\gamma_2^X(\beta)$ is. However, we have also included $\gamma_1^X(\beta)$, which is a scalar and thus, strictly speaking, generalizes $\lvert \gamma_0^X(\beta)\rvert$ rather than $\gamma_0^X(\beta)$ itself.

Under the following assumption, Fact 2.1.1 of \cite{Chaudhuri1996} guarantees that a unique $u$-quantile of $X$ exist for all $u\in B_1^n(0)$. 

\begin{assumption}\label{support}
    The support of the $n$-dimensional random vector $X$, $n\geq2$, is not contained on a single line.
\end{assumption}

Proposition 6.1 in \cite{Konen2022} shows that for such an $X$, $q^X$ is continuous. In the rest of this section, we assume that $X$ satisfies Assumption \ref{support}.

We now consider the spherical asymmetry of a distribution, which is a relevant property for multivariate distributions. We propose the spherical asymmetry of a distribution as 
\begin{equation*}
    \alpha^X(\beta):=\log\bigg(\frac{\sup_{\xi\in S^{n-1}}\lVert q^X(\beta \xi)-m^X\rVert}{\inf_{\xi\in S^{n-1}}\lVert q^X(\beta \xi)-m^X\rVert}\bigg).
\end{equation*}
By transforming the data as in \cite{Chakraborty2001} to have roughly isotropic covariance, $\alpha^X(\beta)$ can also be used to measure elliptical asymmetry. All of our measures can be used to test properties of distributions, such as whether a distribution is skewed or spherically symmetric or whether one distribution is more spread out than another. 

Note that all of the integrands used to define these measures are continuous thanks to Assumption \ref{support} and thus have finite integrals because the domain of integration is the compact $S^{n-1}$. Next, consider the following two assumptions.

\begin{assumption} \label{nonzero1}
There exists some $\xi\in S^{n-1}$ for which $q^X(\beta\xi)\neq q^X(-\beta\xi)$.
\end{assumption}
\begin{assumption} \label{nonzero2}
For all $\xi\in S^{n-1}$, $q^X(\beta\xi)\neq m^X$.
\end{assumption}

Clearly $\gamma_1^X(\beta)$ and $\kappa_1^X(\beta,\beta')$ are defined in $\mathbb{R}$ if and only if Assumption \ref{nonzero1} holds. In fact, $\gamma_2^X(\beta)$ and $\kappa_2^X(\beta,\beta')$ are defined in $\mathbb{R}$ under the same condition and only under this condition since the continuity of $q^X$ implies that the map $\xi\mapsto\lVert q^X(\beta \xi)-q^X(-\beta \xi)\rVert$ is positive on a non-null set (according to the standard volume measure on $S^{n-1}$). On the other hand, $\alpha(\beta)$ is defined in $\mathbb{R}$ if and only if Assumption \ref{nonzero2} holds. Both of these assumptions can be guaranteed when $X$ has a non-atomic distribution not supported on a single line, in which case $q^X$ is a homeomorphism and hence injective; see Theorem 6.2 in \cite{Konen2022}.

$X$ is said to have a non-skewed distribution if $X-m^X$ and $-(X-m^X)$ have identical distributions and a spherically symmetric distribution if $X-m^X$ and $A(X-m^X)$ have identical distributions for all orthogonal $n\times n$ matrices $A$. Then, we can show that the skewness and spherical asymmetry measures behave as desired when the distribution of $X$ is non-skewed or spherically symmetric, respectively. We can also show that our measures have desirable invariance and equivariance properties with respect to scaling, translation, and orthogonal transformations like rotation and reflection.

\begin{proposition}\label{prop}
Let $\beta\in (0,1)$ and Assumption \ref{support} hold for $X=Y$.
\begin{itemize}
    \item[(a)] If $Y$ has a non-skewed distribution, then under Assumption \ref{nonzero1} for $X=Y$, $\gamma_1^Y(\beta)=\gamma_2^Y(\beta)=0$.

    \item[(b)] If $Y$ has a spherically symmetric distribution, then under Assumption \ref{nonzero2} for $X=Y$, $\alpha^Y(\beta)=0$.

    \item[(c)] Let $A$ be an orthogonal $n\times n$ matrix, $c>0$, and $v\in\mathbb{R}^n$. Then, $\delta_1^{cAY+v}(\beta)=c\delta_1^{Y}(\beta)$ and $\delta_2^{cAY+v}(\beta)=c\delta_2^{Y}(\beta)$. Under Assumption \ref{nonzero1} for $X=Y$,  $\gamma_1^{cAY+v}(\beta)=\gamma_1^{Y}(\beta)$, $\gamma_2^{cAY+v}(\beta)=\gamma_2^{Y}(\beta)$, $\kappa_1^{cAY+v}(\beta,\beta')=\gamma_1^{Y}(\beta)$, and $\kappa_2^{cAY+v}(\beta,\beta')=\gamma_2^{Y}(\beta)$. Under Assumption \ref{nonzero2} for $X=Y$, $\alpha^{cAY+v}(\beta)=\alpha^{Y}(\beta)$
    \end{itemize}
\end{proposition}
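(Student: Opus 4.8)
The engine behind all three parts is a single equivariance identity for geometric quantiles under affine maps, so the plan is to establish it first. For $c>0$, an orthogonal $n\times n$ matrix $A$, and $v\in\mathbb{R}^n$, I claim
\[
q^{cAX+v}(u)=cA\,q^X(A^{T}u)+v\qquad\text{for all }u\in B_1^n(0),
\]
and in particular $m^{cAX+v}=cA\,m^X+v$. This follows from the substitution $p=cAq+v$ in the objective $E[\lVert(cAX+v)-p\rVert+\langle u,(cAX+v)-p\rangle]$: since $A$ is orthogonal and $c>0$, one has $\lVert(cAX+v)-(cAq+v)\rVert=c\lVert X-q\rVert$ and $\langle u,(cAX+v)-(cAq+v)\rangle=c\langle A^{T}u,X-q\rangle$, so the objective equals $c\,E[\lVert X-q\rVert+\langle A^{T}u,X-q\rangle]$, which is minimized at $q=q^X(A^{T}u)$; hence the original is minimized at $p=cA\,q^X(A^{T}u)+v$. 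Uniqueness of the minimizer is preserved because $cAY+v$, $-Y$, and $Y-m^Y$ all still satisfy Assumption \ref{support}, and equality in distribution forces equality of the (unique) quantile maps.

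For part (c) I would substitute the identity into each definition. In $\delta_1^{cAY+v}(\beta)$ and the numerator of $\gamma_1^{cAY+v}(\beta)$ the $+v$ terms cancel and $cA$ factors out of the norm by orthogonality, leaving $c\sup_{\xi}\lVert q^Y(\beta A^{T}\xi)-q^Y(-\beta A^{T}\xi)\rVert$ and $c\sup_{\xi}\lVert q^Y(\beta A^{T}\xi)+q^Y(-\beta A^{T}\xi)-2m^Y\rVert$; the change of variable $\eta=A^{T}\xi$ is a bijection of $S^{n-1}$, so these equal $c\,\delta_1^Y(\beta)$ and $c$ times the numerator of $\gamma_1^Y(\beta)$. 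The same substitution handles $\delta_2$ and $\gamma_2$, now using that the surface measure $d\xi$ on $S^{n-1}$ is invariant under the orthogonal change of variable $\eta=A^{T}\xi$. Taking ratios, the factors of $c$ cancel, giving $\delta_i^{cAY+v}(\beta)=c\,\delta_i^Y(\beta)$, $\kappa_i^{cAY+v}(\beta,\beta')=\kappa_i^Y(\beta,\beta')$, and $\gamma_1^{cAY+v}(\beta)=\gamma_1^Y(\beta)$; since the numerator of $\gamma_2$ is a vector that factors as $cA$ times the numerator of $\gamma_2^Y$, one gets $\gamma_2^{cAY+v}(\beta)=A\,\gamma_2^Y(\beta)$ (orthogonal equivariance, which is the stated identity when $A=I$). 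The identical cancellation of $+v$ and factoring of $c$ in both numerator and denominator gives $\alpha^{cAY+v}(\beta)=\alpha^Y(\beta)$, and Assumptions \ref{nonzero1}/\ref{nonzero2} transfer from $X=Y$ to $X=cAY+v$ via the identity, so the ratios stay well defined.

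For parts (a) and (b) I would pass to the centered vector $Z:=Y-m^Y$; the translation case of the identity gives $q^Z(u)=q^Y(u)-m^Y$ and $m^Z=0$. If $Y$ is non-skewed, $Z\stackrel{d}{=}-Z=(-I)Z$, so the identity with $A=-I$ yields $q^Z(u)=q^{-Z}(u)=-q^Z(-u)$, i.e.\ $q^Z$ is odd. Then $q^Y(\beta\xi)+q^Y(-\beta\xi)-2m^Y=q^Z(\beta\xi)+q^Z(-\beta\xi)=0$ for every $\xi$, so the numerator of $\gamma_1^Y$ vanishes and $\gamma_1^Y(\beta)=0$; moreover $\int_{S^{n-1}}(q^Y(\beta\xi)-m^Y)\,d\xi=\int_{S^{n-1}}q^Z(\beta\xi)\,d\xi=0$ since $\xi\mapsto-\xi$ is a measure-preserving involution of $S^{n-1}$ that flips the integrand's sign, so $\gamma_2^Y(\beta)=0$; Assumption \ref{nonzero1} keeps the denominators positive. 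If $Y$ is spherically symmetric, $Z\stackrel{d}{=}AZ$ for every orthogonal $A$, so the identity gives $q^Z(u)=A\,q^Z(A^{T}u)$, equivalently $q^Z(Au)=A\,q^Z(u)$. Given $\xi,\xi'\in S^{n-1}$, choose orthogonal $A$ with $A\xi=\xi'$; then $q^Z(\beta\xi')=A\,q^Z(\beta\xi)$, hence $\lVert q^Y(\beta\xi')-m^Y\rVert=\lVert q^Z(\beta\xi')\rVert=\lVert q^Z(\beta\xi)\rVert=\lVert q^Y(\beta\xi)-m^Y\rVert$. So the supremum and infimum defining $\alpha^Y(\beta)$ coincide (and are positive by Assumption \ref{nonzero2}), giving $\alpha^Y(\beta)=\log 1=0$.

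The main point requiring care is bookkeeping rather than a real obstacle: verifying that every auxiliary vector ($cAY+v$, $-Z$, $AZ$, $Y-m^Y$) still satisfies Assumption \ref{support} so that its quantile map is single-valued and the identity legitimately identifies unique minimizers; that equality in distribution implies equality of these quantile maps; and that each change of variables invokes the rotation-invariance of the surface measure on $S^{n-1}$. The only notational subtlety worth flagging is that $\gamma_2$, being vector-valued, is orthogonally \emph{equivariant} rather than invariant, consistent with its role as the multivariate analogue of the signed scalar $\gamma_0^X(\beta)$.
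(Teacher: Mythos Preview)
Your proof is correct and follows the same route as the paper: both establish the affine-equivariance identity $q^{cAY+v}(u)=cA\,q^{Y}(A^{T}u)+v$ (the paper cites Chaudhuri's Facts~2.2.1--2.2.2, you rederive it directly) and then substitute into each definition, using that $\xi\mapsto A^{T}\xi$ is a measure-preserving bijection of $S^{n-1}$. Your closing observation that the vector-valued $\gamma_2$ satisfies $\gamma_2^{cAY+v}(\beta)=A\,\gamma_2^{Y}(\beta)$---orthogonal \emph{equivariance} rather than invariance---is in fact sharper than the paper's own stated conclusion and is the correct formulation.
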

\begin{proof}
(a) Fact 2.2.1 of \cite{Chaudhuri1996} details the equivariance of geometric quantiles to translation and orthogonal transformations. The result follows immediately from
\begin{align*}
    q^Y(-\beta\xi)-m^Y=q^{Y-m^Y}(-\beta\xi)=q^{-(Y-m^Y)}(-\beta\xi)=-q^{-(Y-m^Y)}(\beta\xi)&=-q^{Y-m^Y}(\beta\xi) \\
    &=-(q^Y(\beta\xi)-m^Y),
\end{align*}
where the five equalities follow from translation equivariance, non-skewness, orthogonal equivariance, non-skewness, and translation equivariance, respectively.

(b) For any $\xi,\xi'\in S^{n-1}$, there exists some orthogonal $A$ for which $\xi'=A\xi$. Then, 
\begin{align*}
    q^Y(\beta \xi')-m^Y=q^{Y-m^Y}(\beta\xi')=q^{A(Y-m^Y)}(\beta\xi')=Aq^{A(Y-m^Y)}(\beta\xi)&=Aq^{Y-m^Y}(\beta\xi) \\
    &=A(q^Y(\beta\xi)-m^Y),
\end{align*}
where the five equalities follow from translation equivariance, spherical symmetry, orthogonal equivariance, spherical symmetry, and translation equivariance, respectively. Thus, the orthogonality of $A$ implies that $\lVert q^Y(\beta \xi')-m^Y\rVert=\lVert q^Y(\beta \xi)-m^Y\rVert$ for all $\xi,~\xi'\in S^{n-1}$, from which the result follows.

(c) Note that Assumption \ref{support} holds for $X=cAY+v$. By the aforementioned equivariance of geometric quantiles with respect to translation and orthogonal transformations, in addition to equivariance with respect to scaling, 
\begin{equation}\label{equi}
q^{cAY+v}(\beta\xi)=cq^Y(\beta A\xi)+v;
\end{equation}
see Facts 2.2.1 and 2.2.2 of \cite{Chaudhuri1996}. This implies that Assumption \ref{nonzero1} holds for $X=cAY+v$ whenever it does so for $X=Y$, and similarly for Assumption \ref{nonzero2}. Given $\xi\in S^{n-1}$, $\xi'$ is in $S^{n-1}$ if and only if $\xi'=A\xi$ for some orthogonal $A$. The results are then easily derived from this fact and (\ref{equi}).
\end{proof}

For each positive integer $k$, define a set $\Xi_k\subset S^{n-1}$ of cardinality $k$. Then, our measures can be approximated by 
\begin{equation}\begin{gathered}\label{approximations}
\hat{\delta}_1^X(\beta;\Xi_k):=\max_{\xi\in \Xi_k}{\lVert q^X(\beta \xi)-q^X(-\beta \xi)\rVert},~~
\hat{\delta}_2^X(\beta;\Xi_k):=\frac{1}{k}\sum_{\xi\in \Xi_k}{\lVert q^X(\beta \xi)-q^X(-\beta \xi)\rVert}, \\
\hat{\gamma}_1^X(\beta;\Xi_k):=\frac{\max_{\xi\in \Xi_k}{\lVert q^X(\beta \xi)+q^X(-\beta \xi)-2m^X\rVert}}{\hat{\delta}_1^X(\beta;\Xi_k)},\\
\hat{\gamma}_2^X(\beta;\Xi_k):=\frac{(1/k)\sum_{\xi\in \Xi_k}{(q^X(\beta \xi)-m^X)}}{\hat{\delta}_2^X(\beta;\Xi_k)}, \\
\hat{\kappa}_1^X(\beta,\beta';\Xi_k):=\frac{\hat{\delta}_1^X(\beta';\Xi_k)}{\hat{\delta}_1^X(\beta;\Xi_k)}, ~~
\hat{\kappa}_2^X(\beta,\beta';\Xi_k):=\frac{\hat{\delta}_2^X(\beta';\Xi_k)}{\hat{\delta}_2^X(\beta;\Xi_k)}, \\
\hat{\alpha}^X(\beta;\Xi_k):=\log\bigg(\frac{\max_{\xi\in \Xi_k}\lVert q^X(\beta \xi)-m^X\rVert}{\min_{\xi\in \Xi_k}\lVert q^X(\beta \xi)-m^X\rVert}\bigg).
\end{gathered}\end{equation}
For each $k$, define $\mu_k$ to be the empirical measure corresponding to $\Xi_k$; that is, the probability measure that assigned a mass of $1/k$ to each point in $\Xi_k$. Define $\mu$ to the uniform probability measure on $S^{n-1}$, or equivalently the normed volume measure of $S^{n-1}$; that is, for any measurable $F\subset S^{n-1}$, $\mu(F)$ is the $(n-1)$-dimensional volume of $F$ according to the standard volume measure on $S^{n-1}$ divided by $SA(n-1)$. The sequence of sets $\Xi_1,\Xi_2,\ldots$ is called equidistributed if the corresponding sequence of empirical measures $\mu_1,\mu_2,\ldots$ converge weakly to $\mu$.

\begin{proposition} \label{approx}
    Let $\Xi_1,\Xi_2,\ldots\subset S^{n-1}$ be an equidistant sequence, $\beta\in(0,1)$, and Assumption \ref{support} hold for $X=Y$. Then, $\lim_{k\rightarrow\infty}\hat{\delta}_1^Y(\beta;\Xi_k)=\delta_1^Y(\beta)$ and $\lim_{k\rightarrow\infty}\hat{\delta}_2^Y(\beta;\Xi_k)=\delta_2^Y(\beta)$. Under Assumption \ref{nonzero1} for $X=Y$, $\lim_{k\rightarrow\infty}\hat{\gamma}_1^Y(\beta;\Xi_k)=\gamma_1^Y(\beta)$, $\lim_{k\rightarrow\infty}\hat{\gamma}_2^Y(\beta;\Xi_k)=\gamma_2^Y(\beta)$, $\lim_{k\rightarrow\infty}\hat{\kappa}_1^Y(\beta,\beta';\Xi_k)=\kappa_1^Y(\beta,\beta')$, and $\lim_{k\rightarrow\infty}\hat{\kappa}_2^Y(\beta,\beta';\Xi_k)=\kappa_2^Y(\beta,\beta')$. Under Assumption \ref{nonzero2} for $X=Y$, $\lim_{k\rightarrow\infty}\hat{\alpha}^Y(\beta;\Xi_k)=\alpha^Y(\beta)$.
\end{proposition}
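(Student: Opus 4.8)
The plan is to reduce everything to two elementary facts about weak convergence of probability measures on the compact metric space $S^{n-1}$, together with the continuity of $q^Y$ (Proposition 6.1 of \cite{Konen2022}, valid under Assumption \ref{support}). Since $q^Y$ and $\xi\mapsto\pm\beta\xi$ are continuous, the maps $\xi\mapsto\lVert q^Y(\beta\xi)-q^Y(-\beta\xi)\rVert$, $\xi\mapsto q^Y(\beta\xi)+q^Y(-\beta\xi)-2m^Y$, and $\xi\mapsto\lVert q^Y(\beta\xi)-m^Y\rVert$ are all continuous on the compact set $S^{n-1}$, hence bounded, with their suprema and infima attained. The first fact I will use is that $\mu_k\to\mu$ weakly means exactly that $\int_{S^{n-1}}\phi\,d\mu_k\to\int_{S^{n-1}}\phi\,d\mu$ for every bounded continuous $\phi$ (applied coordinatewise, this also covers $\mathbb{R}^n$-valued $\phi$); this immediately yields $\hat\delta_2^Y(\beta;\Xi_k)\to\delta_2^Y(\beta)$ and convergence of the numerator of $\hat\gamma_2^Y(\beta;\Xi_k)$ to that of $\gamma_2^Y(\beta)$.

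The second ingredient is a lemma: if $\phi:S^{n-1}\to\mathbb{R}$ is continuous then $\max_{\xi\in\Xi_k}\phi(\xi)\to\max_{\xi\in S^{n-1}}\phi(\xi)$ and $\min_{\xi\in\Xi_k}\phi(\xi)\to\min_{\xi\in S^{n-1}}\phi(\xi)$. I would prove the maximum case; the minimum is symmetric. The bound $\max_{\Xi_k}\phi\le\max_{S^{n-1}}\phi$ is automatic. For the reverse, fix $\varepsilon>0$, pick $\xi^{\ast}$ attaining $\max_{S^{n-1}}\phi$, and use continuity to get an open $U\ni\xi^{\ast}$ on which $\phi>\phi(\xi^{\ast})-\varepsilon$; since $\mu$ is the uniform measure it has full support, so $\mu(U)>0$, and the portmanteau characterization of weak convergence gives $\liminf_k\mu_k(U)\ge\mu(U)>0$, hence $\Xi_k\cap U\neq\emptyset$ for all large $k$, which forces $\max_{\Xi_k}\phi>\phi(\xi^{\ast})-\varepsilon$. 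Letting $\varepsilon\downarrow0$ gives the claim. Applying this to the three continuous functions above yields $\hat\delta_1^Y(\beta;\Xi_k)\to\delta_1^Y(\beta)$, convergence of the numerator of $\hat\gamma_1^Y$, and convergence of both the numerator and the denominator of the ratio inside $\hat\alpha^Y$.

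It then remains to pass to the quotients, for which I would verify that the limiting denominators are strictly positive. Under Assumption \ref{nonzero1} there is $\xi_0$ with $\lVert q^Y(\beta\xi_0)-q^Y(-\beta\xi_0)\rVert>0$; by continuity this persists on an open neighborhood $V$ of $\xi_0$, so $\delta_1^Y(\beta)>0$, and since $\mu(V)>0$ also $\delta_2^Y(\beta)>0$. These are precisely the denominators of $\gamma_1^Y$, $\gamma_2^Y$, $\kappa_1^Y$, and $\kappa_2^Y$, so the approximating denominators $\hat\delta_1^Y(\beta;\Xi_k)$ and $\hat\delta_2^Y(\beta;\Xi_k)$, having positive limits, are eventually bounded away from $0$; the stated limits then follow from the algebra of limits of quotients (using, for $\hat\gamma_2^Y$, continuity of the norm to pass from coordinatewise convergence of the numerator). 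For $\hat\alpha^Y$, Assumption \ref{nonzero2} and compactness give $\inf_{\xi\in S^{n-1}}\lVert q^Y(\beta\xi)-m^Y\rVert>0$, so the denominator inside the logarithm converges to a positive limit, and continuity of $\log$ on $(0,\infty)$ gives $\hat\alpha^Y(\beta;\Xi_k)\to\alpha^Y(\beta)$.

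I expect the only genuinely non-routine step to be the max/min lemma: weak convergence alone does not control the behaviour of $\phi$ on $\mathrm{supp}(\mu_k)$, so one must exploit that the limit measure $\mu$ has full support on $S^{n-1}$, which is exactly what forces $\Xi_k$ to eventually meet every fixed neighborhood of an extremizer. Everything else is bookkeeping built on continuity, compactness, and the definition of weak convergence.
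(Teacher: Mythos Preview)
Your proposal is correct and follows essentially the same approach as the paper: both prove the max/min lemma by choosing a maximizer, using continuity to get a neighborhood on which the function is within $\varepsilon$ of the maximum, and then invoking Portmanteau together with the full support of $\mu$ to conclude that $\Xi_k$ eventually enters this neighborhood; and both handle the average-based quantities directly via the definition of weak convergence applied to bounded continuous (coordinatewise for the vector-valued numerator of $\hat\gamma_2$) functions. You are somewhat more explicit than the paper in verifying that the limiting denominators are strictly positive before passing to quotients, but this is just added detail rather than a different route (and your parenthetical about ``continuity of the norm'' for $\hat\gamma_2^Y$ is unnecessary, since $\gamma_2^Y$ is a vector and coordinatewise convergence of the numerator together with scalar convergence of the nonzero denominator already suffices).
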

\begin{proof}
    For a continuous function $f:S^{n-1}\rightarrow\mathbb{R}$, there exists some $\xi'$ in the compact set $S^{n-1}$ for which $\sup_{\xi\in S^{n-1}}f(\xi)=f(\xi')$. By continuity, for any $\epsilon>0$, there is some open $\eta>0$ for which $v\in B_\eta^n(\xi')\cap S^{n-1}$ implies $f(v)>f(\xi')-\epsilon$. Then, by the Portmanteau theorem, $\lvert\Xi_k\cap B_\eta^n(\xi')\rvert=\mu_n(B_\eta^n(\xi')\cap S^{n-1})\rightarrow \mu(B_\eta^n(\xi')\cap S^{n-1})>0$ as $k\rightarrow\infty$, implying that $\Xi_k\cap B_\eta^n(\xi')$ is non-empty for sufficiently large $k$. Therefore, for sufficiently large $k$, $\max_{\xi\in\Xi_k}f(\xi)>\sup_{\xi\in S^{n-1}}f(\xi)-\epsilon$ while $\max_{\xi\in\Xi_k}f(\xi)\leq \sup_{\xi\in S^{n-1}}f(\xi)$ since $\Xi_k\subset S^{n-1}$. This can be done for any $\epsilon$, so $\lim_{k\rightarrow\infty}\max_{\xi\in\Xi_k}f(\xi)=\sup_{\xi\in S^{n-1}}f(\xi)$. The analogous result replacing $\sup$ with $\inf$ and $\max$ with $\min$ can be shown similarly, and because $q^Y$ is continuous, $\hat{\delta}_1^Y(\beta;\Xi_k)$, $\hat{\gamma}_1^Y(\beta;\Xi_k)$, $\hat{\kappa}_1^Y(\beta,\beta';\Xi_k)$, and $\hat{\alpha}_1^Y(\beta;\Xi_k)$ converge to the appropriate terms as $k\rightarrow\infty$.

    The aforementioned continuous $f$ is also bounded because its domain $S^{n-1}$ is compact, and so the Portmanteau theorem also implies that $\sum_{\xi\in\Xi_k}f(\xi)=\int f d\mu_k\rightarrow \int f d\mu=\int_{S^{n-1}}f(\xi)d\xi$. Then, the rest of the statement immediately follows from the continuity of $q^Y$ and each of its $n$ component functions.
\end{proof}

\begin{corollary}\label{as}
    The conclusion of Proposition \ref{approx} holds on a set of probability 1 if for each positive integer $k$, $\Xi_k=\{\xi_1,\ldots,\xi_k\}$, where $\xi_1,\xi_2,\ldots$ are independent and identically distributed random elements drawn from the uniform distribution on $S^{n-1}$.
\end{corollary}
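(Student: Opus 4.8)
The plan is to reduce Corollary \ref{as} to Proposition \ref{approx}: if I can show that, almost surely, the random sequence $\Xi_1,\Xi_2,\ldots$ is equidistributed, then Proposition \ref{approx} applies pathwise on that probability-one event and immediately yields every stated limit (including the ratio quantities $\hat\gamma_i$, $\hat\kappa_i$, $\hat\alpha$, whose nonvanishing denominators are already taken care of by Assumptions \ref{nonzero1} and \ref{nonzero2} inside Proposition \ref{approx}). So the entire content of the corollary is the almost sure weak convergence $\mu_k\Rightarrow\mu$ of the empirical measures of i.i.d.\ uniform points to the uniform law on $S^{n-1}$ — which is Varadarajan's theorem — and I would include a short self-contained argument rather than merely cite it.

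First I would use that $S^{n-1}$, being a compact metric space, has a separable space $C(S^{n-1})$ of continuous real-valued functions under the supremum norm, and fix a countable dense subset $\{f_1,f_2,\ldots\}$ of it. For each fixed $j$, the random variables $f_j(\xi_1),f_j(\xi_2),\ldots$ are i.i.d.\ and bounded (a continuous function on a compact set), hence integrable, so the strong law of large numbers gives $\int f_j\, d\mu_k=\frac{1}{k}\sum_{i=1}^{k}f_j(\xi_i)\to\int f_j\, d\mu$ on an event $\Omega_j$ of probability one. Setting $\Omega_0:=\bigcap_{j\geq1}\Omega_j$, a countable intersection, $\Omega_0$ still has probability one.

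Next I would upgrade this from the countable family to all of $C(S^{n-1})$ on $\Omega_0$. Given $f\in C(S^{n-1})$ and $\epsilon>0$, choose $f_j$ with $\lVert f-f_j\rVert_\infty<\epsilon$; since $\mu_k$ and $\mu$ are probability measures, $\lvert\int f\, d\mu_k-\int f_j\, d\mu_k\rvert\leq\epsilon$ and $\lvert\int f_j\, d\mu-\int f\, d\mu\rvert\leq\epsilon$, so the triangle inequality together with $\int f_j\, d\mu_k\to\int f_j\, d\mu$ gives $\limsup_{k\to\infty}\lvert\int f\, d\mu_k-\int f\, d\mu\rvert\leq2\epsilon$; letting $\epsilon\to0$ yields $\int f\, d\mu_k\to\int f\, d\mu$ for every $f\in C(S^{n-1})$, i.e.\ $\mu_k\Rightarrow\mu$. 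Hence every $\omega\in\Omega_0$ produces an equidistributed sequence $\Xi_1(\omega),\Xi_2(\omega),\ldots$, and applying Proposition \ref{approx} on $\Omega_0$ completes the proof.

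The only genuinely delicate point is the order of quantifiers: the strong law is invoked separately for each test function, so one must intersect over a countable convergence-determining family and then extend to arbitrary $f$ by uniform approximation — which is exactly what separability of $C(S^{n-1})$ provides. Everything else is routine bookkeeping, and a reader who prefers can simply cite Varadarajan's theorem in place of the second and third paragraphs.
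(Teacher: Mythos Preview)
Your proposal is correct and follows essentially the same approach as the paper: both reduce the corollary to Proposition~\ref{approx} by establishing that the random sequence $\Xi_1,\Xi_2,\ldots$ is almost surely equidistributed, with the paper simply citing Varadarajan's theorem (via separability of $S^{n-1}$) while you unpack its standard proof in the compact metric case.
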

\begin{proof}
    $\Xi_1,\Xi_2,\ldots$ is almost surely equidistributed because $S^{n-1}$ is separable; see for example Theorem 11.4.1 of \cite{Dudley2002} and Theorem 3 of \cite{Varadarajan1958}.
\end{proof}

\begin{remark}\label{weak}
When $n=2$, letting $\Xi_k=\{(\cos(2\pi l/k),\sin(2\pi l/k)):l=1,\ldots,k\}$ clearly results in an equidistributed sequence. The situation is more complicated when $n\geq 3$, but with the above corollary, we can guarantee the desired convergences with probability 1. One can easily draw an element from the uniform distribution on $S^{n-1}$ by generating a vector from an $n$-variate normal distribution with non-zero, isotropic variance and dividing the vector by its norm.
\end{remark}

\begin{remark}\label{comparison}
    Our measures have certain advantages over existing measures of these multivariate characteristics. As mentioned in the introduction, a major disadvantage of those of \cite{Chaudhuri1996} is the difficulty of computation. Most other existing measures, such as those of \cite{Mardia1970}, which we make use of in Section \ref{sexperiments} and are the best-known measures of multivariate skewness, and \cite{Mori1994}, are calculated with moments and subject to the standard concerns about robustness and sensitivity to outliers. On the other hand, as \cite{Konen2024} show, spatial quantiles are robust, and thus our measures are also expected to have desirable robustness properties.
\end{remark}

\section{Numerical experiments} \label{experiments}

\subsection{Performance of the proposed measures} \label{sexperiments}

In this section, we want to explore the performance of each of the seven measures. To do this, we generated datasets from 12 different distributions, three for each of the four characteristics, in the Euclidean plane. For a given characteristic, each distribution has an associated level $\nu\in\{0,1,2\}$ and is chosen in such a way that a plausible measure of that characteristic should decrease in magnitude as $\nu$ increases. Denoting a vector drawn from a two-dimensional distribution as $(v^1,v^2)^T$, these distributions are as follows. In all cases, $v^1$ and $v^2$ are independent. For dispersion, $v^1\sim\mathcal{N}(0,1)/2$ for all $\nu$, and $v^2\sim\mathcal{N}(0,1)$, $\mathcal{N}(0,1)/2$, $\mathcal{N}(0,1)/4$ for $\nu=0,1,2$, respectively. Denoting by $\mathcal{SN}(\rho)$ the skew-normal distribution with location parameter $0$, scale parameter $1$, and shape (skewness) parameter $\rho$, for skewness, $v^1\sim \mathcal{SN}(8)$, $\mathcal{SN}(2)$, $\mathcal{SN}(0)$, and $v^2\sim \mathcal{SN}(8)/2$, $\mathcal{SN}(2)/2$, $\mathcal{SN}(0)/2$ for $\nu=0,1,2$, respectively. Denoting by $t(l)$ the $t$-distribution with $l$ degrees of freedom, for kurtosis, $v^1\sim t(5)$, $t(7)$, $t(13)$, and $v^2\sim t(5)/2$, $t(7)/2$, $t(13)/2$ for $\nu=0,1,2$, respectively. For spherical asymmetry, $v^1,~v^2\sim \mathcal{SN}(8)$, $\mathcal{SN}(2)$, $\mathcal{SN}(0)$ for $\nu=0,1,2$, respectively.

Because skew-normal distributions are considered to increase in skewness with the shape parameter and $t$-distributions are considered to increase in kurtosis with the degrees of freedom, the rationales for choosing distributions are clear. Because of existing concerns about geometric quantiles for non-isotropic distributions, for the dispersion datasets, we decided to decrease dispersion only in $v^2$ to show that $\delta_1^X(\beta)$ and $\delta_2^X(\beta)$ work well even when the data do not have isotropic covariance. We divided the distributions for $v^2$ by 2 for the skewness and kurtosis datasets for the same reason. In contrast, for the spherical asymmetry datasets, we have not done so to show that $\alpha^X(\beta)$ can detect changes in spherical asymmetry even when the covariance remains roughly isotropic.

From each distribution, we generated samples of 300 vectors $v_1,\ldots,v_N$ ($N=300$) and estimated the population values of various measures below. Over 400 simulations, we computed the average values of the estimates and their standard errors. 

First, we did this with sample versions of the Fr\'echet variance and the multivariate skewness and kurtosis of \cite{Mardia1970} for the relevant distributions,
\begin{align} \label{samples}
    \frac{1}{N}\sum_{i=1}^N\lVert v_i-\bar v\rVert^2,~~ \frac{1}{N^2}\sum_{i=1}^N\sum_{j=1}^N[(v_i-\bar v)^TS^{-1}(v_j-\bar v)]^3,~~ \frac{1}{N}\sum_{i=1}^N[(v_i-\bar v)^TS^{-1}(v_i-\bar v)]^2,
\end{align}
respectively, where $\bar v$ is the sample mean of $\{v_1,\ldots,v_N\}$ and $S$ is the sample covariance matrix. The results are listed in Table \ref{t1}, which shows that, according to existing standard measures, these three characteristics tend to decrease with $\nu$, verifying our choice of distributions as a sensible one for our purposes.

\begin{table}
    \centering
    \caption{Estimates for (\ref{samples}) for the relevant distributions, where $(\beta,\beta')=(0.2,0.8)$ for the kurtosis measures and $\beta=0.5$ otherwise, with standard errors in parentheses.}
    {\small
\begin{tabular}{ |c||c|c|c| } 
\hline
$\nu$ & Fr\'echet variance & Mardia's multivariate skewness & Mardia's multivariate kurtosis \\
\hline
0 & 1.24887 (0.00426) & 1.78491 (0.02267) & 14.44701 (0.29255) \\ 
1 & 0.50001 (0.00150) & 0.48598 (0.01005) & 11.38173 (0.15252) \\
2 & 0.31077 (0.00102) & 0.08062 (0.00302) & 9.13397 (0.04794) \\
\hline
\end{tabular}
    }
    \label{t1}
\end{table}

In addition, we performed this process with univariate quantile-based measures of dispersion, skewness, and kurtosis, defined in the introduction for each coordinate of the data, for the appropriate distributions. For the kurtosis measures $(\beta,\beta')=(0.2,0.8)$, and otherwise $\beta=0.5$. These depend on the coordinate system and are imperfect as measures for multivariate data, but they provide a quantile-based view, and the results in Table \ref{t2} also validate our choice like Table \ref{t1}. 

\begin{table}
    \centering
    \caption{Estimated coordinate-wise univariate quantile-based measures of dispersion, skewness, and kurtosis for the relevant distributions, where $(\beta,\beta')=(0.2,0.8)$ for the kurtosis measures and $\beta=0.5$ otherwise, with standard errors in parentheses.}
    {\small
\begin{tabular}{ |c||c|c|c|c|c|c| } 
\hline
\multirow{2}{*}{$\nu$} & \multicolumn{2}{c|}{$\delta_0^X(\beta)$} & \multicolumn{2}{c|}{$\lvert\gamma_0^X(\beta)\rvert$} & \multicolumn{2}{c|}{$\kappa_0^X(\beta,\beta')$} \\
\cline{2-7}
 & $v^1$ & $v^2$ & $v^1$ & $v^2$ & $v^1$ & $v^2$ \\
\hline
\multirow{2}{*}{0} & 0.67053 & 1.34786 & 0.14036 & 0.15012 & 5.49972 & 5.59444 \\ 
 & (0.00225) & (0.00472) & (0.00412) & (0.00393) & (0.03086) & (0.03288) \\ 
\multirow{2}{*}{1} & 0.67144 & 0.67401 & 0.06140 & 0.06421 & 5.40792 & 5.44465 \\
 & (0.00236) & (0.00225) & (0.00398) & (0.00381) & (0.03068) & (0.03136) \\ 
\multirow{2}{*}{2} & 0.67038 & 0.33348 & 0.00396 & 0.00342 & 5.26431 & 5.28845 \\
 & (0.00211) & (0.00120) & (0.00379) & (0.00400) & (0.03127) & (0.03092) \\
\hline
\end{tabular}
    }
    \label{t2}
\end{table}

Now, we get to our seven measures, each calculated for the relevant distributions, using (\ref{approximations}) with $\Xi_k$ for $k=24$, as described in Remark \ref{weak}. We again used $(\beta,\beta')=(0.2,0.8)$ for the kurtosis measures and $\beta=0.5$ for the others. Table \ref{t3} lists the results; each of the measures decreases as $\nu$ increases, as expected.

\begin{table}
    \centering
    \caption{Estimates for (\ref{approximations}) using $\Xi_k$, $k=24$, as described in Remark \ref{weak}, for the relevant distributions, where $(\beta,\beta')=(0.2,0.8)$ for the kurtosis measures and $\beta=0.5$ otherwise, with standard errors in parentheses.}
    {\small
\begin{tabular}{ |c||c|c|c|c|c|c|c| } 
\hline
$\nu$ & $\hat{\delta}_1^X(\beta;\Xi_k)$ & $\hat{\delta}_2^X(\beta;\Xi_k)$ & $\hat{\gamma}_1^X(\beta;\Xi_k)$ & $\lVert\hat{\gamma}_2^X(\beta;\Xi_k)\rVert$ & $\hat{\kappa}_1^X(\beta,\beta';\Xi_k)$ & $\hat{\kappa}_2^X(\beta,\beta';\Xi_k)$ & $\hat{\alpha}^X(\beta;\Xi_k)$\\
\hline
\multirow{2}{*}{0} & 1.52211 & 1.32734 & 0.13874 & 0.01161 & 5.65998 & 6.03159 & 0.30551 \\ 
 & (0.00390) & (0.00287) & (0.00208) & (0.00036) & (0.01941) & (0.01616) & (0.00340) \\ 
\multirow{2}{*}{1} & 0.89785 & 0.87429 & 0.07861 & 0.00357 & 5.51208 & 5.87081 & 0.17869 \\ 
 & (0.00182) & (0.00171) & (0.00149) & (0.00016) & (0.01937) & (0.01561) & (0.00278) \\ 
\multirow{2}{*}{2} & 0.75481 & 0.65864 & 0.05628 & 0.00178 & 5.35924 & 5.71843 & 0.13664 \\ 
 & (0.00179) & (0.00136) & (0.00118) & (0.00009) & (0.01894) & (0.01512) & (0.00209) \\ 
\hline
\end{tabular}
    }
    \label{t3}
\end{table}

To alleviate concerns about extreme geometric quantiles, we also performed the same process when $(\beta,\beta')=(0.2,0.98)$ for the kurtosis measures and $\beta=0.98$ otherwise. The results are shown in Table \ref{t5}, with Table \ref{t4} corresponding to Table \ref{t2}. We have similar conclusions.

\begin{table}[h!]
    \centering
    \caption{Estimated coordinate-wise univariate quantile-based measures of dispersion, skewness, and kurtosis for the relevant distributions, where $(\beta,\beta')=(0.2,0.98)$ for the kurtosis measures and $\beta=0.98$ otherwise, with standard errors in parentheses.}
    {\small
\begin{tabular}{ |c||c|c|c|c|c|c| } 
\hline
\multirow{2}{*}{$\nu$} & \multicolumn{2}{c|}{$\delta_0^X(\beta)$} & \multicolumn{2}{c|}{$\lvert\gamma_0^X(\beta)\rvert$} & \multicolumn{2}{c|}{$\kappa_0^X(\beta,\beta')$} \\
\cline{2-7}
 & $v^1$ & $v^2$ & $v^1$ & $v^2$ & $v^1$ & $v^2$ \\
\hline
\multirow{2}{*}{0} & 2.26768 & 4.50882 & 0.40440 & 0.41051 & 12.28863 & 12.40258 \\ 
 & (0.00703) & (0.01424) & (0.00276) & (0.00254) & (0.09263) & (0.09731) \\ 
\multirow{2}{*}{1} & 2.25123 & 2.26780 & 0.17675 & 0.16988 & 11.24632 & 11.21419 \\
 & (0.00741) & (0.00673) & (0.00328) & (0.00325) & (0.07421) & (0.08626) \\ 
\multirow{2}{*}{2} & 2.26861 & 1.13193 & 0.00800 & 0.00275 & 10.04283 & 10.07831 \\
 & (0.00709) & (0.00347) & (0.00323) & (0.00328) & (0.06891) & (0.07318) \\
\hline
\end{tabular}
    }
    \label{t4}
\end{table}

\begin{table}[h!]
    \centering
    \caption{Estimates for (\ref{approximations}) using $\Xi_k$, $k=24$, as described in Remark \ref{weak}, for the relevant distributions, where $(\beta,\beta')=(0.2,0.98)$ for the kurtosis measures and $\beta=0.98$ otherwise, with standard errors in parentheses.}
    {\small
\begin{tabular}{ |c||c|c|c|c|c|c|c| } 
\hline
$\nu$ & $\hat{\delta}_1^X(\beta;\Xi_k)$ & $\hat{\delta}_2^X(\beta;\Xi_k)$ & $\hat{\gamma}_1^X(\beta;\Xi_k)$ & $\lVert\hat{\gamma}_2^X(\beta;\Xi_k)\rVert$ & $\hat{\kappa}_1^X(\beta,\beta';\Xi_k)$ & $\hat{\kappa}_2^X(\beta,\beta';\Xi_k)$ & $\hat{\alpha}^X(\beta;\Xi_k)$\\
\hline
\multirow{2}{*}{0} & 9.68865 & 8.02046 & 0.15085 & 0.01102 & 19.70886 & 19.73841 & 0.32443 \\ 
 & (0.01920) & (0.01371) & (0.00111) & (0.00017) & (0.07213) & (0.06882) & (0.00191) \\ 
\multirow{2}{*}{1} & 5.20339 & 5.05801 & 0.07876 & 0.00299 & 18.83241 & 18.60669 & 0.17032 \\ 
 & (0.00820) & (0.00716) & (0.00124) & (0.00009) & (0.06449) & (0.05936) & (0.00180) \\ 
\multirow{2}{*}{2} & 4.84549 & 4.01105 & 0.02804 & 0.00029 & 17.94682 & 17.44866 & 0.09779 \\ 
 & (0.00989) & (0.00711) & (0.00069) & (0.00002) & (0.05963) & (0.052123) & (0.00145) \\ 
\hline
\end{tabular}
    }
    \label{t5}
\end{table}

As a note on the choice of $k$, it should be large enough to provide a good approximation, and in light of Proposition \ref{approx}, the larger, the better, so in that sense, $k$ is analogous to the number of repetitions in a permutation test, the bootstrap method and, more generally, the Monte Carlo method. 
To justify our choice of $k=24$, we have recalculated Tables \ref{t4} and \ref{t5} in Tables \ref{t6} and \ref{t7}, respectively, with $k=72$ and the changes are negligible.

\begin{table}[h!]
    \centering
    \caption{Estimates for (\ref{approximations}) using $\Xi_k$, $k=72$, as described in Remark \ref{weak}, for the relevant distributions, where $(\beta,\beta')=(0.2,0.8)$ for the kurtosis measures and $\beta=0.5$ otherwise, with standard errors in parentheses.}
    {\small
\begin{tabular}{ |c||c|c|c|c|c|c|c| } 
\hline
$\nu$ & $\hat{\delta}_1^X(\beta;\Xi_k)$ & $\hat{\delta}_2^X(\beta;\Xi_k)$ & $\hat{\gamma}_1^X(\beta;\Xi_k)$ & $\lVert\hat{\gamma}_2^X(\beta;\Xi_k)\rVert$ & $\hat{\kappa}_1^X(\beta,\beta';\Xi_k)$ & $\hat{\kappa}_2^X(\beta,\beta';\Xi_k)$ & $\hat{\alpha}^X(\beta;\Xi_k)$\\
\hline
\multirow{2}{*}{0} & 1.52468 & 1.32735 & 0.13992 & 0.01161 & 5.66199 & 6.03167 & 0.31020 \\ 
 & (0.00390) & (0.00287) & (0.00208) & (0.00036) & (0.01954) & (0.01618) & (0.00343) \\  
\multirow{2}{*}{1} & 0.89921 & 0.87432 & 0.07979 & 0.00358 & 5.51633 & 5.87119 &  0.18316\\ 
 & (0.00183) & (0.00171) & (0.00149) & (0.00016) & (0.01935) & (0.01561) & (0.00279) \\ 
\multirow{2}{*}{2} & 0.75602 & 0.65864 & 0.05782 & 0.00178 & 5.36248 & 5.71882 & 0.14103 \\ 
 & (0.00179) & (0.00135) & (0.00119) & (0.00009) & (0.01895 & (0.01511) & (0.00209) \\ 
\hline
\end{tabular}
    }
    \label{t6}
\end{table}

\begin{table}[h!]
    \centering
    \caption{Estimates for (\ref{approximations}) using $\Xi_k$, $k=72$, as described in Remark \ref{weak}, for the relevant distributions, where $(\beta,\beta')=(0.2,0.98)$ for the kurtosis measures and $\beta=0.98$ otherwise, with standard errors in parentheses.}
    {\small
\begin{tabular}{ |c||c|c|c|c|c|c|c| } 
\hline
$\nu$ & $\hat{\delta}_1^X(\beta;\Xi_k)$ & $\hat{\delta}_2^X(\beta;\Xi_k)$ & $\hat{\gamma}_1^X(\beta;\Xi_k)$ & $\lVert\hat{\gamma}_2^X(\beta;\Xi_k)\rVert$ & $\hat{\kappa}_1^X(\beta,\beta';\Xi_k)$ & $\hat{\kappa}_2^X(\beta,\beta';\Xi_k)$ & $\hat{\alpha}^X(\beta;\Xi_k)$\\
\hline
\multirow{2}{*}{0} & 9.69202 & 8.02062 & 0.15198 & 0.01102 & 19.693314 & 19.73880 & 0.32757 \\ 
 & (0.01917) & (0.01371) & (0.00112) & (0.00017) & (0.07229) & (0.06884) & (0.00192) \\ 
\multirow{2}{*}{1} & 5.20581 & 5.05801 & 0.08033 & 0.00300 & 18.80979 & 18.60679 & 0.17242 \\ 
 & (0.00822) & (0.00716) & (0.00128) & (0.00009) & (0.06436) & (0.05939) & (0.00183) \\ 
\multirow{2}{*}{2} & 4.84741 & 4.01110 & 0.03034 & 0.00029 & 17.92440 & 17.44927 & 0.09955 \\ 
 & (0.00987) & (0.00710) & (0.00071) & (0.00002) & (0.05957) & (0.05211) & (0.00147) \\ 
\hline
\end{tabular}
    }
    \label{t7}
\end{table}

Our measures act reasonably despite the concerns of \cite{Girard2017} about the shape of (extreme) isoquantile contours. This is not surprising. Suppose a dataset is transformed while maintaining its median. In this case, it is sufficient, though not necessary, for the contours to be pulled inward toward the median for our dispersion measures to decrease, and the specific shapes of the isoquantile contours do not matter. This is observed for our dispersion distributions. See the first column of Figure \ref{mvfig:simquantiles}, which displays how estimated isoquantile contours ($k=24$) for the distributions based on samples of size $N=30000$, centered so that the medians coincide for ease of comparing contours for different distributions, change with $\nu$. The second and fourth columns also clearly show reductions in skewness and spherical asymmetry, respectively, as $\nu$ increases. The third column is more challenging to interpret but is included for completeness. Furthermore, Figure \ref{mvfig:simquantiles} provides a visualization of our measures of the dispersion, skewness, kurtosis, and spherical asymmetry of a distribution.



\begin{figure}[h!]
	\centering
		\includegraphics[width=0.23\linewidth]{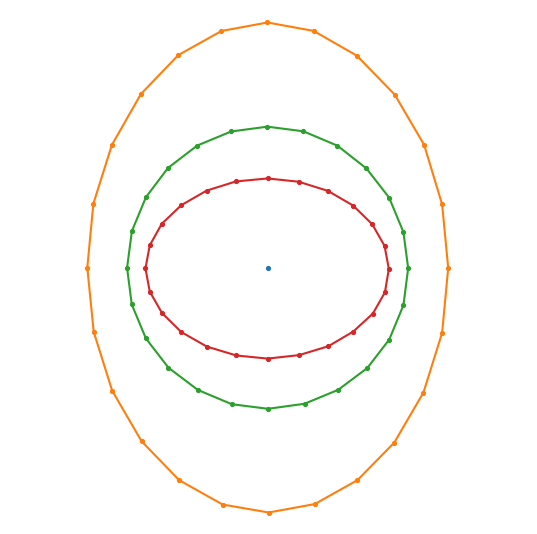}
		\includegraphics[width=0.23\linewidth]{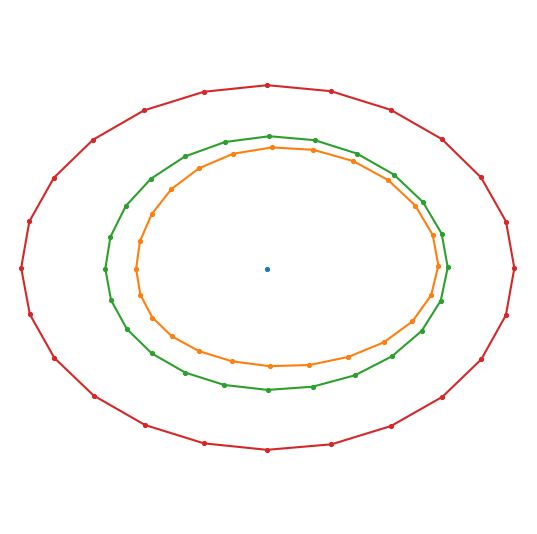}
		\includegraphics[width=0.23\linewidth]{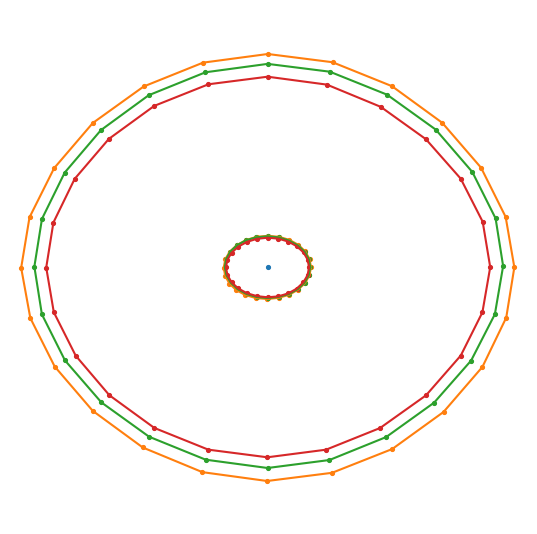}
		\includegraphics[width=0.23\linewidth]{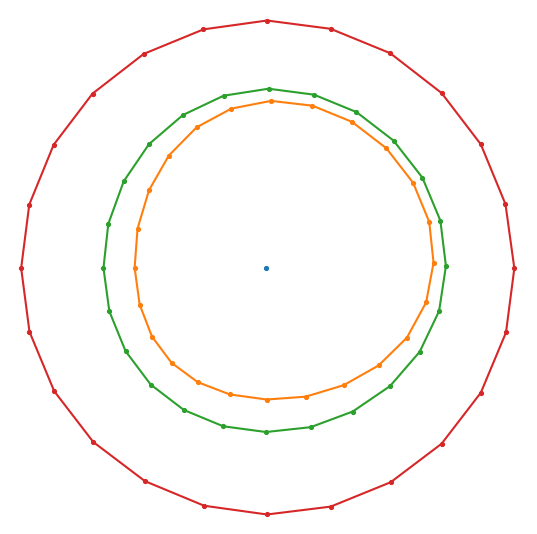}
		\includegraphics[width=0.23\linewidth]{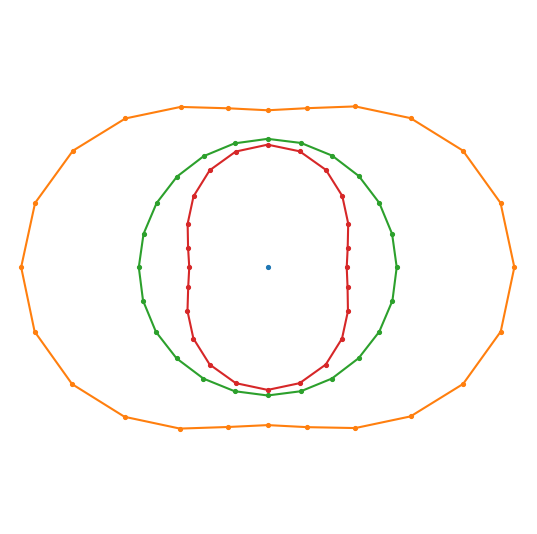}
		\includegraphics[width=0.23\linewidth]{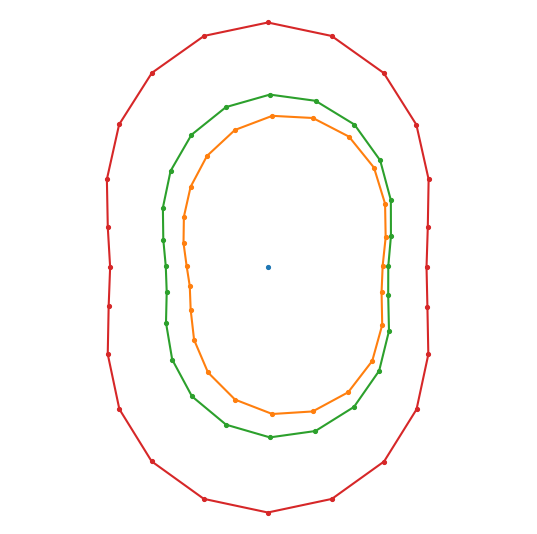}
		\includegraphics[width=0.23\linewidth]{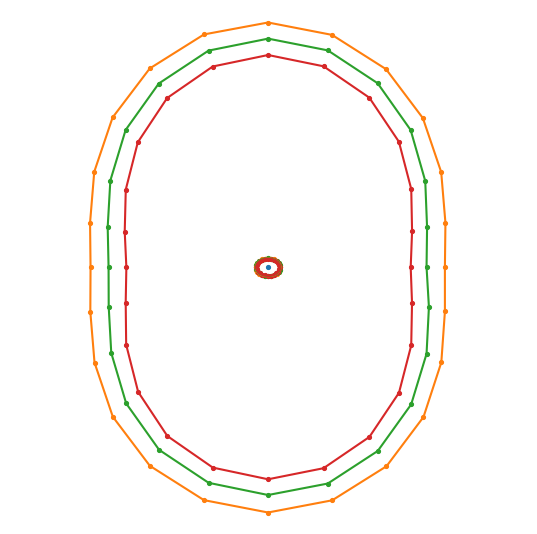}
		\includegraphics[width=0.23\linewidth]{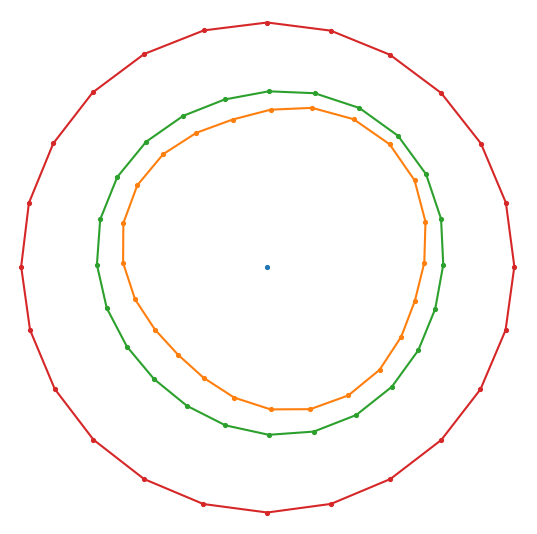}
 \caption{The four columns display estimated isoquantiles contours ($k=24$) for, in order, the dispersion, skewness, kurtosis, and spherical asymmetry distributions, based on samples of size $N=30000$ centered so that the medians are at the origin; plots are not to the same scale. In the first row, $(\beta,\beta')=(0.2,0.8)$ for the kurtosis measures and $\beta=0.5$ otherwise; in the second row, $(\beta,\beta')=(0.2,0.98)$ for the kurtosis measures and $\beta=0.98$ otherwise. The orange, green, and red contours are the estimated contours for the $\nu=0,1,2$ distributions, respectively, while the blue point in the center is their common median.}
	\label{mvfig:simquantiles}
\end{figure}

\subsection{Confidence regions} \label{boot}

We now use bootstrapping to compute confidence regions and, hence, do testing. Recall that for a parameter $\theta$, an estimate $\hat{\theta}$, and bootstrap estimates $\hat{\theta}_1^*,\ldots,\hat{\theta}_T^*$, a possible $100(1-\tau)$\% bootstrap confidence region is $\{\hat{\theta}-w:w\in W\}$, where $W$ is some region containing $100(1-\tau)$\% of the values of $\hat{\theta}_t^*-\hat{\theta}$, $t=1,\ldots,T$. Here, we let $W=B_r^n(0)$ with $r$ chosen for appropriate coverage. This gives a pivotal (also called basic or empirical) bootstrap confidence region. If one were also interested in a percentile bootstrap confidence region, the most obvious candidate would be a ball centered at $\hat{\theta}$ of the form $\{\hat{\theta}+u:u\in B_{r'}^n(0)\}$, with $r'$ chosen so that this set contains $100(1-\tau)\%$ of the values of $\hat{\theta}_t^*$, $t=1,\ldots,T$; this is clearly exactly equivalent to our pivotal bootstrap confidence region, and therefore, has the same coverage.

Setting $T=1000$ and $\alpha=0.05$, for $X$ following the standard bivariate normal distribution, we calculated a confidence region for $\gamma_2^X(\beta)$ (precisely, $\hat{\gamma}_2^X(\beta,\Xi_k)$ for $\Xi_k$, $k=24$, as described in Remark \ref{weak}) based on $N=300$ draws from this distribution; for computational reasons, we have chosen $k=24$ instead of $k=120$, as in the previous section. For this $X$, $\gamma_2^X(\beta)=0$ for all $\beta\in(0,1)$ thanks to Proposition \ref{prop}(a), so we checked whether 0 was contained in the confidence region. To estimate confidence region coverage, this process was repeated 1000 times for $\beta=0.5$ and 0.98. Our confidence regions had excellent coverage, with 96\% and 94.8\% of confidence regions containing 0 when $\beta=0.5$ and 0.98, respectively. This provides some empirical support for our bootstrapped confidence regions, but further investigation is needed into their theoretical properties.

Because $\gamma_2^X(\beta)=0$ for all $\beta\in(0,1)$ by Proposition \ref{prop}(a) for normal $X$, theoretically justified confidence regions could be used to test for the normality of a multivariate distribution by rejecting the null hypothesis of normality when the region for $\gamma_2^X(\beta)=0$ does not contain 0.


\section{Conclusion}

This paper contributes to the growing literature on the uses of geometric quantiles. The measures defined here provide useful summaries of multivariate distributions, even for extreme values of $\beta$, and are easy to calculate.

More helpful research in this area might include computing these measures for different types of distributions, as in Proposition \ref{prop}(a) and (b), and working on their asymptotic properties and testing and confidence regions via bootstrapping or other methods. The adaptation of our measures to functional data is also of interest; as \cite{Chaudhuri1996} notes, multivariate quantiles can naturally be generalized to infinite-dimensional Hilbert space-valued data, and \cite{Chowdhury2019} do so to perform quantile regression for functional data. The measures in this paper also naturally generalize to Hilbert spaces and, thus, to functional data. More work on theoretical and practical considerations when using these measures in functional data analysis, an active field of research as evidenced by \cite{Aneiros2022}, would be welcome.

	\section*{Acknowledgments} 
		
  This research was supported by the National Research Foundation of Korea (NRF) funded by the Korean government (2021R1A2C1091357).

	

	
 \bibliographystyle{apalike}
\bibliography{references}
	
\end{document}